\documentclass[11pt]{article}
\usepackage{amsmath, latexsym, amsfonts, amssymb, amsthm, amscd}
\usepackage{color}
\usepackage{dsfont}
\usepackage{ulem}

\textheight 230mm \topmargin 0cm \textwidth 175mm \headheight 0pt
\oddsidemargin -0.5cm\headsep 0in

\numberwithin{equation}{section}

\newtheorem{theorem}{Theorem}[section]

\newtheorem{proposition}[theorem]{Proposition}
\newtheorem{lemma}[theorem]{Lemma}

\newcommand{\ud}{\mathrm{d}} 
\renewcommand{\P}{\mathbb{P}}
\newcommand{\R}{\mathbb{R}}

\newcommand{\E}{\mathbb{E}}

\title{
\textbf{Extinction  and  coming down  from infinity of CB-processes with competition in a L\'evy  environment}}

\author{ H. Leman\footnote{ {\sc Centro de Investigaci\'on en Matem\'aticas A.C. Calle Jalisco s/n. 36240 Guanajuato, M\'exico.} E-mail: helene.leman@cimat.mx. Corresponding author}\,\, and J.C. Pardo\footnote{ {\sc Centro de Investigaci\'on en Matem\'aticas A.C. Calle Jalisco s/n. 36240 Guanajuato, M\'exico.} E-mail: jcpardo@cimat.mx}\\}

\begin{document}

\maketitle
\begin{abstract}
In this note, we are interested on  the event of extinction  and the  property of  coming  down from infinity of continuous state  branching (or CB for short) processes with competition  in a L\'evy environment whose branching mechanism satisfies the  so-called Grey's condition.  In particular, we deduce, under  the assumption that the L\'evy  environment does not drift towards infinity,  that for any starting point the process becomes  extinct in finite time a.s.  Moreover if we  impose an integrability condition on the competition mechanism, then the process comes down from infinity regardless the long term behaviour of the environment. 

	\bigskip 
	
\noindent {\sc Key words and phrases}:  Continuous state branching processes in random environment, competition, extinction, coming down from infinity.	
		\bigskip
		
		\noindent MSC 2000 subject classifications:  60J80, 60J75, 60J85.
\end{abstract}

\section{Introduction and main results.}

A continuous state branching process (or CB-process for short)  is a $[0,\infty]$-valued strong Markov process $Y=(Y_t,t\geq 0)$ with c\'adl\'ag paths satisfying the branching property, that is to say, for all $\theta\geq 0$ and $x,y\geq 0$,
	\[
	\mathbb{E}_{x+y}\Big[e^{-\theta Y_t}\Big]=\mathbb{E}_{x}\Big[e^{-\theta Y_t}\Big]\mathbb{E}_{y}\Big[e^{-\theta Y_t}\Big].
	\]
	This model arises as the scaling limit of Bienaym\'e-Galton-Watson (or BGW for short) processes; where individuals behave independently one from each other and each individual gives birth to a random number of offspring, with the same offspring distribution (see for instance Grimvall \cite{gri}).
	Moreover, its law  is completely characterized by the latter identity, i.e.
	\begin{equation*}
	\mathbb{E}_{x}\Big[e^{-\theta Y_t}\Big]=e^{-xu_t(\lambda)},\qquad  t\geq 0,
	\end{equation*}
	where $u$ is a differentiable  function in $t$ satisfying
	\begin{equation}
	\frac{\partial u_{t}(\lambda)}{\partial t}=-\psi(u_{t}(\lambda)), \qquad u_{0}(\lambda)=\lambda,
	\label{DEut}
	\end{equation}
	and $\psi$ satisfies 
	\begin{equation}\label{branching mechanism}
	\psi(\lambda)=-b\lambda+\gamma^2
	\lambda^2+\int_{(0,\infty)}\big(e^{-\lambda x}-1+\lambda x\big)\mu(\ud x),\qquad \lambda \geq 0,
	\end{equation}
	where $a,\gamma\in \mathbb{R}$ and $\mu$ is a
	measure concentrated on $(0,\infty)$ such that
	\begin{equation}\label{mugral}
\int_{(0, \infty)}(z\land z^2)\mu(\ud z)<\infty.
\end{equation}
The function $\psi$  is known as the branching mechanism of $Y$. A process in this class can also be defined as the unique non-negative strong solution of the following stochastic differential equation (SDE for short) 
	\begin{equation*}
	\begin{split}
	Y_t=Y_0+b\int_0^t Y_s &\ud s+\int_0^t \sqrt{2\gamma^2 Y_s}\ud B_s +\int_0^t\int_{(0,\infty)}\int_0^{Y_{s-}}z\widetilde{N}(\ud s,\ud z,\ud u),
	\end{split}
	\end{equation*}
	where $B=(B_t,t\geq 0)$ is a standard Brownian motion, $N(\ud s,\ud z,\ud u)$ is a Poisson random measure independent of $B$, with intensity $\ud s\mu(\ud z)\ud u$  and $\widetilde{N}$ is the compensated measure of $N$. { Solutions for this type of SDE has been studied  before, see for instance  Dawson and Li \cite{DL} and Caballero et al. \cite{CLU} and the references therein. }

A natural way to extend and make this model more realistic is by considering competition pressure. Such type of models has been considered  recently by several authors,  under the name of CB-processes  with competition; see for instance  Ba and Pardoux \cite{BPa}, Berestycki et al. \cite{ BFF}, Foucart \cite{Fou}, Lambert \cite{Lambert2005},  Ma \cite{Ma2015} and Pardoux \cite{Pardoux} and the references therein. A well known example of this family of processes is the so called logistic Feller diffusion which can  be constructed as scaling limits of BGW-processes with competition, see for instance  Lambert \cite{Lambert2005}.
 
CB-processes with competition can also be defined as the unique strong solution of an SDE and they are determined by two  components; a branching mechanism $\psi$ and a competition mechanism $g$. The  competition mechanism $g$ is a non-decreasing continuous function  on $[0,\infty)$ with $g(0)=0$. According to Ma \cite{Ma2015} (see also Berestycki et al. \cite{BFF}) a CB-process  with  competition  $Y=(Y_t, t\ge 0)$ can be defined as the unique strong solution of the following  SDE 
\begin{equation}\label{cbpcom}
Y_t=Y_0+b\int_0^t Y_s \ud s-\int_0^tg(Y_s)\ud s+\int_0^t\sqrt{2\gamma^2Y_s}\ud B^{(b)}_s+\int_0^t\int_{(0,\infty)}\int_0^{Y_{s-}} z\widetilde{N}^{(b)}(\ud s, \ud z, \ud u),
\end{equation}
where $B^{(b)}$ is a standard Brownian motion and   $N^{(b)}$ is a Poisson random measure  which is defined  on $\mathbb{R}_+^3$, with intensity measure $\ud s \mu(\ud z) \ud u$ such that \eqref{mugral} is satisfied, and $\widetilde{N}^{(b)}$ denotes its compensated version. 

Lambert \cite{Lambert2005} studied the long term behaviour of the logistic case i.e. $g(x)=cx^2,$ for $x\ge 0$ and $c>0$, using a Lamperti-type  representation (random time change) where the driven process turns out to be a generalised  Ornstein-Uhlenbeck process driven by a spectrally positive L\'evy process, here denoted by $X$, satisfying a logarithmic moment condition.   To be more precise when the process $X$ is a subordinator  then the associated logistic branching process $Y$  may converge to a specified distribution or  to $0$ in probability (see Theorem 3.4 in \cite{Lambert2005}). When $X$   is not a subordinator, then the process $Y$ goes to $0$ a.s. Moreover,  the process $Y$ gets extinct in finite time a.s.
accordingly as 
\begin{equation}\label{grey}
\int^\infty \frac{\ud u}{\psi(u)}<\infty,
\end{equation}
which is the so-called Grey's condition.  In \cite{Lambert2005}, under Grey's condition, the  Laplace transform of the extinction time was computed explicitly  and   the law of the process coming down from infinity was also determined. 

More general competition mechanisms were  considered by  Ba and Pardoux \cite{BPa} in the case where the branching mechanism is of the form $\psi(u)=\gamma^2 u^2$, for $u\ge 0$, see also Chapter 8 in the book of Pardoux \cite{Pardoux}. Actually, they allow the competition mechanism $g$ to be a continuous function and not necessarily monotone and provided a necessary and sufficient condition for the process to become extinct. In this setting, if   $g$ is  negative and non-increasing then the competition mechanism can be  interpreted as cooperation   in the sense of Gonzalez-Casanova et al. \cite{GPP}.

Branching processes in random environment (BPREs) were first introduced and studied by Smith and Wilkinson \cite{smith} and since then they have attracted considerable interest  (see for instance \cite{afa12,bo10} and the references therein). BPREs are interesting since they are more realistic models compared with classical branching processes and, from the mathematical point of view, they have new properties such as another phase transition in the subcritical regime. 
	Scaling limits in the finite variance case were conjectured by Keiding \cite{kei} who introduced Feller diffusions in random environment. This conjecture was proved by Kurtz \cite{Kurtz} and more recently by Bansaye and Simatos \cite{basim} in  more general cases.  The continuous state version, with an environment driven by a L\'evy process, was recently introduced independently by  He et al. \cite{he2016continuous}  and Palau and Pardo \cite{PP}, as the unique strong solution of an SDE (see below) under the name of CB-processes in L\'evy environment. 
	
In this paper, we are interested in  CB-processes with  competition in a L\'evy  environment which are defined  as the unique strong solution of the following  SDE
\begin{equation}\label{SDE_Levy}
\begin{split}
Z_t&=Z_0+b\int_0^tZ_s \ud s-\int_0^tg(Z_s)\ud s+\int_0^t\sqrt{2\gamma^2Z_s}\ud B^{(b)}_s\\
&\hspace{4cm}+\int_0^t\int_{(0,\infty)} \int_0^{Z_{s-}}z\widetilde{N}^{(b)}(\ud s, \ud z, \ud u)+\int_0^t Z_{s-}\ud  S_s,
\end{split}
\end{equation}
where $g$ is a non-decreasing continuous function  on $[0,\infty)$ with $g(0)=0$,  $B^{(b)}$ and  $N^{(b)}$ are defined as before  and $S$ is a L\'evy process independent of $B^{(b)}$ and  $N^{(b)}$ which can be written as follows
\begin{equation}\label{defS}
S_t={\tt d} t+ \sigma B^{(e)}_t +\int_0^t \int_{(-1,1)^c} (e^z-1)N^{(e)}({\ud s},{\ud z})+\int_0^t \int_{(-1,1)} (e^z-1)\widetilde{N}^{(e)}({\ud s},{\ud z}),
\end{equation}
 with ${\tt d}\in \mathbb{R},  \sigma\geq 0$,  $B^{(e)}=(B^{(e)}_t,t\ge 0 )$ is a standard Brownian motion and  $N^{(e)}$ is a Poisson random measure taking values on $\R_+\times \R$  with intensity ${\ud s} \pi({\ud z})$ satisfying
 \begin{equation}\label{defpi}
 \int_{\mathbb{R}\setminus\{0\}} (1\land z^2) \pi(\ud z)< \infty.
 \end{equation}
In particular,  our aim is to determine under which conditions  such  family of processes  becomes extinct  (with positive probability or almost surely) and comes down from infinity. 
  
 For  our purposes, we also introduce the auxiliary L\'evy process  which is a modification of  $S$,
\[
K_t=\mathbf{m} t+ \sigma B^{(e)}_t +\int_0^t \int_{(-1,1)^c} zN^{(e)}({\ud s},{\ud z})+\int_0^t \int_{(-1,1)} z\widetilde{N}^{(e)}({\ud s},{\ud z}), \qquad t\ge 0,
\]
where
\[
\mathbf{m}= b+{\tt d}-\frac{\sigma^2}{2}-\int_{(-1,1)} (e^z-1-z)\pi(\ud z).
\]
It is important to note that the drift term $\mathbf{m}$ of the process $K$ provides the interaction between the demographic and environmental parameters. 

We denote by $\mathbb{P}_x$ the law of $Z$ starting from $x>0$, and we define by $T_0=\inf\{t\geq 0, Z_t=0\}$  the first hitting time to $0$ of $Z$,
with the convention that $\inf\{\emptyset\}=\infty$.  In the case without competition i.e. $g\equiv 0$,  He et al. \cite{he2016continuous} proved that the so-called Grey's condition \eqref{grey} is  necessary and sufficient  for CB-processes in a L\'evy  environment 
to become extinct  with positive probability (see Theorem 4.1 in   \cite{he2016continuous}). Moreover, if the auxiliary process $K$ does not drift to $\infty$ or equivalently
\begin{equation}
\label{eq_driftinfty}
\liminf_{t\to \infty}K_t=-\infty,
\end{equation}
and  Grey's condition \eqref{grey} holds, then its associated CB-process in a L\'evy environment becomes extinct at finite time a.s., see Corollary 4.4 in \cite{he2016continuous}. It is important to note that when condition \eqref{eq_driftinfty} is fullfilled, the associated CB-process in a  L\'evy environment is critical or subcritical. 

We also point out that under the  assumption \eqref{mugral}, the CB-process in a L\'evy environment does not explode. The proof of this claim follows exactly the same arguments of Proposition 1 in \cite{PP1} where the authors consider the specific case of Brownian environment. Indeed, in their arguments the environment does not play any role, when assumption \eqref{mugral} is fulfilled the non-explosion  only depends on the branching mechanism.

Our first result, which follows from a comparison criteria for CB-processes with  competition in a L\'evy  environment (see Lemma \ref{lemma_comp} below), gives a necessary condition under which they become extinct. Before stating it, we introduce the CB-process in a L\'evy environment  $ Z^\sharp=(Z^\sharp_t, t\ge 0)$ as the unique strong solution of \eqref{SDE_Levy} but with $g\equiv 0$. For simplicity, we denote its  law  starting from $x> 0$ by $\mathbb{P}^\sharp_x$. 

\begin{proposition}\label{comparison}
Assume that the L\'evy measure $\mu$ associated to the branching mechanism $\psi$ satisfies \eqref{mugral}. For $y\ge x\ge 0$, we have that $(Z, \mathbb{P}_x)$  is stochastically dominated by $(Z, \mathbb{P}_y)$, i.e.
\[
\mathbb{P}\Big(Z_t(x)\le Z_t(y)\,\, \textrm{ for all }\,\, t\ge 0\Big)=1,
\]
where $Z(v)$ denotes $(Z, \mathbb{P}_v)$, under $\mathbb{P}$.
Moreover, the process  $(Z, \mathbb{P}_x)$ is stochastically dominated by  $( Z^\sharp, \mathbb{P}^\sharp_y)$ and, in particular, if the branching mechanism $\psi$ satisfies Grey's condition \eqref{grey},  then $(Z, \mathbb{P}_x)$ becomes extinct  with positive probability and its semigroup is strong Feller. Furthermore  if $K$ does not drift to $\infty$ or equivalently satisfies \eqref{eq_driftinfty}
then $(Z, \mathbb{P}_x)$ becomes extinct at finite time a.s.
\end{proposition}
For the sequel, we always assume that Grey's condition is fulfilled. In other words,  the CB-processes with  competition in a L\'evy  environment that we are considering here  become extinct  with positive probability and are strong Feller.\\

We now state our main result which provides a sufficient condition on the competition mechanism for CB-processes with  competition in a L\'evy  environment  to become extinct a.s. even for favorable environments, i.e. when $K$ drifts to $+\infty$.  In other words, the condition on the competition parameter is so strong that the process become extinct regardless the long term behaviour of the environment. To this aim, we assume  the following integral condition on the competition mechanism $g$: assume that there exists $z_0 >0$ such that $g(z_0)>0$ and 
\begin{equation}\label{H2} 
  \int_{z_0}^{\infty} \frac{\ud y}{g(y)}<\infty.
 \end{equation}
Actually, the above condition
 implies that  the associated  CB-process with competition in a L\'evy random environment {\it comes down from infinity}.  This phenomenon has been observed and studied by several authors in branching processes with interactions, see for instance  Gonz\'alez-Casanova et al. \cite{GPP}, Lambert \cite{Lambert2005}, Li \cite{Li2016}, Li et al. \cite{LiYhangZhou} and Pardoux \cite{Pardoux} and also for stable jump diffusions by D\"oring and Kyprianuo \cite{DK} and some jump diffusions by Bansaye  \cite{Ban}. Formally,  we define the property of {\it coming down from infinity} in the sense that $\infty$ is a {\it continuous entrance point}, i.e.
\[
\lim_{M\to \infty}\lim_{x\to \infty}\mathbb{P}_x(T_M<t)=1 \qquad \textrm{for all} \quad t>0, 
\]
where $T_M=\inf\{t\ge 0: Z_t\le M\}$ and 
the original process can be extended into a Feller process on  $[0,\infty]$  (see for instance Theorem 20.13 in Kallenberg~\cite{Kallenberg} for the diffusion case or Definition 2.2 for Feller processes in \cite{DK}).

\begin{theorem}
\label{theo_meanfinite}
Assume that the L\'evy measure $\mu$ associated to the branching mechanism satisfies \eqref{mugral}.  If Grey's condition \eqref{grey} and  \eqref{H2} hold,  then 
\[
\sup_{x>0}\mathbb{E}_x[T_0]<\infty,
\]
  the boundary point  $\infty$ is a continuous entrance point and  the process  $Z$ comes down from infinity.

\end{theorem}

The previous result can be applied to the particular case when the competition mechanism is logistic (i.e. $g(x)=cx^2$) and  the random environment is driven by a Brownian motion. Actually,  further explicit computations can be carried out for the Laplace transform of the extinction time under $\mathbb{P}_\infty$, the law of the process starting from $\infty$, as it is done in  Leman and Pardo \cite{leman2018extinction1} where the case of the logistic branching process in a Brownian environment  is presented.

The remainder of this note is devoted to the proofs.

\section{Proofs}
In order to prove Proposition \ref{comparison}, we introduce the following stochastic processes as unique strong solutions of the  SDE's. For $i=1,2$, we let 
\[
\begin{split}
Z^{(i)}_t&=Z^{(i)}_0+\int_0^tg_i(Z^{(i)}_s)\ud s+\int_0^t\sqrt{2\gamma^2Z^{(i)}_s}\ud B^{(b)}_s+\int_0^t\int_{(0,\infty)} \int_0^{Z^{(i)}_{s-}}z\widetilde{N}^{(b)}(\ud s, \ud z, \ud u)+\int_0^t Z^{(i)}_{s-}\ud  S^{(i)}_s, 
\end{split}
\]
where
\begin{equation*}
S^{(i)}_t={\tt d} t+ \sigma B^{(e)}_t +\int_0^t \int_{(-1,1)^c} b_i(z)N^{(e)}({\ud s},{\ud z})+\int_0^t \int_{(-1,1)} (e^z-1)\widetilde{N}^{(e)}({\ud s},{\ud z}),
\end{equation*}
with $g_1(z)\ge g_2(z)$, for $z\ge 0$,  and  $b_1(z)\ge b_2(z)$ for $z\in \mathbb{R}$ such that, for $i=1,2$
\[
b_i(z)+1\ge 0, \qquad \textrm{for}\quad z\in \mathbb{R}.
\]
We also  assume that for each $m\ge 0$, there is a non-decreasing concave function $z\mapsto r_m(z)$ on $\mathbb{R}_+$ satisfying 
$\int_{0+} r_m(z)\ud z=\infty$ and
\begin{equation}\label{convexr}
|g_i(x)-g_i(y)|+{\tt d}|x-y|+|x-y|\int_{(-1,1)^c}\Big(|b_i(z)|\land m\Big)\pi(\ud z)\le r_m(|x-y|), \qquad \textrm{for}\quad i=1,2,
\end{equation}
for every $0\le x, y\le m$. According to Proposition 1 in Palau and Pardo,  the previous SDE's possess  unique positive strong solutions that we denote by $Z^{(i)}$ for $i=1,2$.  

Our next result can be deduced using similar arguments as those used in the proof of  Theorem 2.2 in \cite{DL}. For simplicity on exposition we provide its complete proof.

\begin{lemma}\label{lemma_comp}
If $Z^{(1)}_0\ge Z^{(2)}_0$, a.s. then 
\[
\mathbb{P}\Big(Z^{(2)}_t\le Z^{(1)}_t\,\, \textrm{ for all }\,\, t\ge 0\Big)=1.
\]
\end{lemma}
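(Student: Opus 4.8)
The plan is to prove the pathwise comparison by a Yamada--Watanabe--type argument adapted to jumps, in the spirit of Dawson--Li and Fu--Li underlying Proposition 1 in \cite{PP}. Set $D_t=Z^{(1)}_t-Z^{(2)}_t$; since $Z^{(1)}_0\ge Z^{(2)}_0$, the goal is to show that the negative part $D_t^-=\max(-D_t,0)$ vanishes for all $t\ge 0$ a.s. First I would localise through the stopping times $\tau_m=\inf\{t\ge0:Z^{(1)}_t\vee Z^{(2)}_t\ge m\}$, so that on $[0,\tau_m]$ both solutions stay in $[0,m]$, the modulus bound \eqref{convexr} is in force, and all compensators are integrable; the statement then follows by letting $m\to\infty$, using that neither solution explodes. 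On this interval I would introduce the classical sequence of convex $C^2$ functions $\phi_n\uparrow(\cdot)^-$ with $\phi_n\equiv0$ and $\phi_n'\equiv0$ on $[0,\infty)$, $-1\le\phi_n'\le0$, and $\phi_n''\ge0$ supported on a shrinking interval $(-a_{n-1},-a_n)$, the cut-offs $a_n\downarrow0$ chosen so that $|x|\phi_n''(x)\le 2/n$ and $x^2\phi_n''(x)\to0$ on that support; this is the device that copes with the non-Lipschitz square-root diffusion.

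Next I would apply It\^o's formula to $\phi_n(D_t)$ and take expectations. The continuous martingale parts driven by $B^{(b)},B^{(e)}$ and the compensated-measure martingales vanish after localisation. For the second-order continuous terms, the environmental Brownian part gives $\tfrac12\sigma^2D_s^2\phi_n''(D_s)$, which is $O(1/n)$, and the branching Brownian part gives $\tfrac12\big(\sqrt{2\gamma^2Z^{(1)}_s}-\sqrt{2\gamma^2Z^{(2)}_s}\big)^2\phi_n''(D_s)$, which I would bound using $\big(\sqrt{2\gamma^2a}-\sqrt{2\gamma^2b}\big)^2\le 2\gamma^2|a-b|$ together with $|x|\phi_n''(x)=O(1/n)$. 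The two compensated jump mechanisms (the small environmental jumps, whose $D$-increment is $D_{s-}(e^z-1)$, and the branching jumps) enter It\^o's formula through the nonnegative second difference $\phi_n(D_{s-}+\Delta)-\phi_n(D_{s-})-\phi_n'(D_{s-})\Delta$; on $\{D_{s-}\ge0\}$ this vanishes because the corresponding monotone couplings keep both arguments nonnegative, while on $\{D_{s-}<0\}$ it is controlled by $\tfrac12\sup\phi_n''\cdot\Delta^2$, integrable against $\int_{(-1,1)}(e^z-1)^2\pi(\ud z)<\infty$ and $\int_{(0,\infty)}(z\wedge z^2)\mu(\ud z)<\infty$ respectively, and again $O(1/n)$.

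The sign conditions enter through the couplings. For the branching jumps the level variable $u$ ranges over $[0,Z^{(i)}_{s-}]$, so on $\{D_{s-}\ge0\}$ a point with $u<Z^{(2)}_{s-}$ raises both processes equally and one with $Z^{(2)}_{s-}\le u<Z^{(1)}_{s-}$ raises only $Z^{(1)}$, hence $D$ never decreases. For the large environmental jumps, which are raw but finite in number on $[0,\tau_m]$ since $\pi(\{|z|\ge1\})<\infty$, the increment is $Z^{(1)}_{s-}b_1(z)-Z^{(2)}_{s-}b_2(z)$, and on $\{D_{s-}\ge0\}$ the inequalities $Z^{(1)}_{s-}\ge Z^{(2)}_{s-}$, $b_1\ge b_2$ and $1+b_i\ge0$ yield $D_{s-}+\Delta=Z^{(1)}_{s-}(1+b_1(z))-Z^{(2)}_{s-}(1+b_2(z))\ge0$; since $\phi_n$ vanishes on $[0,\infty)$, these jumps contribute nothing on $\{D_{s-}\ge0\}$, while on $\{D_{s-}<0\}$ their total expected contribution is dominated, after compensation, by $D_s^-\int_{(-1,1)^c}(|b_i(z)|\wedge m)\pi(\ud z)$.

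Finally I would treat the finite-variation drift $g_1(Z^{(1)}_s)-g_2(Z^{(2)}_s)+{\tt d}D_s$. On $\{D_s<0\}$, using $g_1\ge g_2$ one gets $g_1(Z^{(1)}_s)-g_2(Z^{(2)}_s)\ge g_2(Z^{(1)}_s)-g_2(Z^{(2)}_s)\ge -r_m(D_s^-)$, and since $-1\le\phi_n'\le0$ the whole drift-plus-large-jump contribution is bounded above by $r_m(D_s^-)$, exactly the combination appearing in \eqref{convexr}. Collecting the estimates and letting $n\to\infty$, so that $\phi_n(D_{t\wedge\tau_m})\uparrow D_{t\wedge\tau_m}^-$ and all $O(1/n)$ terms disappear, I would arrive at
\[
\E\big[D_{t\wedge\tau_m}^-\big]\le\int_0^t\E\big[r_m(D_{s\wedge\tau_m}^-)\big]\,\ud s\le\int_0^t r_m\big(\E[D_{s\wedge\tau_m}^-]\big)\,\ud s,
\]
the last inequality by Jensen's inequality and concavity of $r_m$. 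Since $D_0^-=0$, the Osgood--Bihari condition carried by $r_m$ in \eqref{convexr} forces $\E[D_{t\wedge\tau_m}^-]=0$ for every $t$, hence $Z^{(2)}_{t\wedge\tau_m}\le Z^{(1)}_{t\wedge\tau_m}$ a.s.; letting $m\to\infty$ yields the claim. I expect the main obstacle to be the jump bookkeeping: verifying that the compensated branching-jump second difference is genuinely uniformly integrable in the mark $z$ and of order $1/n$, and that the raw large environmental jumps can be compensated consistently with the truncation $|b_i|\wedge m$, so that only the $r_m$-remainder survives on $\{D_{s-}<0\}$ while the monotone couplings annihilate every jump term on $\{D_{s-}\ge0\}$.
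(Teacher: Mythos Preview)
Your strategy is exactly the paper's: Yamada--Watanabe mollifiers applied to the difference, It\^o's formula term by term, then the Bihari/Osgood inequality driven by the concave modulus $r_m$. The paper works with $\zeta=Z^{(2)}-Z^{(1)}$ and $f_n\uparrow(\cdot)^+$ rather than your $D=Z^{(1)}-Z^{(2)}$ and $\phi_n\uparrow(\cdot)^-$, and it passes through the auxiliary truncated solutions $Z^{(i,m)}$ (with jump sizes and coefficients capped at $m$) rather than merely stopping at $\tau_m$; these are cosmetic differences.

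One point in your bookkeeping is not quite right as written and is precisely the obstacle you flag at the end. For the branching-jump second difference on $\{D_{s-}<0\}$ you cannot appeal to $\tfrac12\sup\phi_n''\cdot\Delta^2$, because $\sup\phi_n''\sim 2/(na_n)\to\infty$. What rescues the estimate is the extra factor $|D_{s-}|$ coming from the $u$-integration (the jump only occurs for $u\in[Z^{(1)}_{s-},Z^{(2)}_{s-})$), combined with the pointwise Taylor bound
\[
\bigl|\phi_n(y-z)-\phi_n(y)+z\phi_n'(y)\bigr|
\;\le\; 2\Big(|z|\wedge \frac{z^2}{n}\int_0^1\frac{1-u}{|y|+uz}\,\ud u\Big)
\;\le\; 2\Big(|z|\wedge \frac{z^2}{n|y|}\Big),
\]
valid for $y<0$, $z>0$, from $|x|\phi_n''(x)\le 2/n$. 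Multiplying by $|D_{s-}|$ cancels the $|y|^{-1}=|D_{s-}|^{-1}$, yielding the $O(1/n)$ bound uniformly in $D_{s-}$. The analogous cancellation for the small environmental jumps comes from $\Delta=D_{s-}(e^z-1)$ carrying its own factor of $|D_{s-}|$. Once you make this explicit, your argument goes through verbatim and matches the paper's.
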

\begin{proof} 
Let $\tau_m=\inf\{t\ge 0: Z^{(1)}_s\ge m \textrm{ or } Z^{(2)}_s\ge m\}$ for $m\ge 1$.  According to the proof of Proposition 1 in Palau and Pardo \cite{PP}, for $i=1, 2$, we have $Z^{(i)}_t=Z^{(i, m)}_t$ for $t<\tau_m$, where $Z^{(i, m)}$ is the unique strong solution to
\[
\begin{split}
Z^{(i, m)}_t&=Z^{(i)}_0+\int_0^tg_i(Z^{(i, m)}_s\land m)\ud s+\int_0^t\sqrt{2\gamma^2Z^{(i, m)}_s\land m}\,\,\ud B^{(b)}_s\\
&+\int_0^t\int_{(0,\infty)} \int_0^{Z^{(i, m)}_{s-}\land m}(z\land m)\widetilde{N}^{(b)}(\ud s, \ud z, \ud u)+\int_0^t \Big(Z^{(i, m)}_{s-}\land m\Big)\ud  S^{(i, m)}_s, 
\end{split}
\]
where
\begin{equation*}
S^{(i)}_t={\tt d} t+ \sigma B^{(e)}_t +\int_0^t \int_{(-1,1)^c} \Big(b_i(z)\land m\Big)N^{(e)}({\ud s},{\ud z})+\int_0^t \int_{(-1,1)} \Big((e^z-1)\land m\Big)\widetilde{N}^{(e)}({\ud s},{\ud z}).
\end{equation*}
In other words for $m\ge 1$, we have 
\[
\mathbb{P}\left(Z^{(1)}_t\ge Z^{(2)}_t, \textrm{ for all} \,\, t<\tau_m\right)=\mathbb{P}\left(Z^{(1, m)}_t\ge Z^{(2, m)}_t, \textrm{ for all } \,\, t<\tau_m\right).
\]
Then, a direct application of Theorem 2.2 in \cite{DL} implies that the latter probability equals one. This ends the proof of Lemma~\ref{lemma_comp}.
 \end{proof}
 
\begin{proof}[Proof of Proposition \ref{comparison}] The first statement follows directly from Lemma \ref{lemma_comp} by taking 
\[
g_1(z)=g_2(z)=({\tt d}+b)z-g(z)\quad \textrm{ for } z\ge 0\qquad  \textrm{and}\qquad b_1(z)=b_2(z)=e^{z}-1 \quad \textrm{ for } z\in \mathbb{R}.
\]
For the second statement, we recall that the competition mechanism $g$ is positive and non-decreasing implying that we can take $g_1(z)=({\tt d}+b)z$, $g_2(z)=({\tt d}+b)z-g(z)$ and $b_1(z)=b_2(z)=e^{z}-1$. Again from Lemma \ref{lemma_comp}, we deduce  that the process  $(Z, \mathbb{P}_x)$ is stochastically dominated by  $( Z^\sharp, \mathbb{P}^\sharp_y)$ for $y\ge x$. 
In other words,  from  Theorem 4.1 and Corollary 4.4 in   \cite{he2016continuous}, we deduce that  $(Z, \mathbb{P}_x)$  becomes extinct  with positive probability and that  if $K$ does not drift to $\infty$ or equivalently satisfies \eqref{eq_driftinfty}
then the process becomes extinct at finite time a.s.

In order to conclude our proof, it remains to deduce  that the process $(Z, \mathbb{P}_x)$ is strong Feller under Grey's condition \eqref{grey}. To do so, we use a similar argument as in Theorem 4.5 \cite{he2016continuous}.  We introduce another formulation  of  CB-processes with competition in a L\'evy environment for all initial values.  From Theorem III.6 in El Karoui and M\'el\'eard \cite{EKM}, on an extension of the original probability space we can define  $W(\ud s, \ud u)$  a time-space Gaussian white noise on $(0,\infty)^2$ with intensity $\ud s \ud u$ such that \eqref{SDE_Levy} may be rewritten as follows
\[
\begin{split}
Z_t&=x+b\int_0^t Z_s\ud s-\int_0^t g(Z_s) \ud s +\sqrt{2{\gamma^2}}\int_0^t \int_{0}^{Z_s}W(\ud s, \ud u)\\
&\hspace{3cm}+\int_0^t\int_{(0,\infty)} \int_0^{Z_{s-}}z\widetilde{N}^{(b)}(\ud s, \ud z, \ud u)+\int_0^t Z_{s-}\ud  S_s.
\end{split}
\]
According to Proposition 1 in Palau and Pardo \cite{PP} (see also  the proof of Theorem 4.5 of He et al. \cite{he2016continuous} for the case $g\equiv 0$), for each $x\ge 0$, there is a unique strong solution of the previous SDE
that we denote by $(\Gamma^{(x)}_t, t\ge0)$ with $\Gamma^{(x)}_0=x$, which is also a Markov process with the same transition semigroup as $(Z_t, \mathbb{P}_x)$, i.e.
\[
{\tt P}_tf(z):=\mathbb{E}_z[f(Z_t)]=\mathbb{E}\left[f\left(\Gamma^{(z)}_t\right)\right].
\]
Similar arguments as those used in the proof of Lemma \ref{lemma_comp} allow us to conclude that, for $y\ge 0$, we have
\[
\mathbb{P}\left(\Gamma^{(y)}_t\ge \Gamma^{(x)}_t, \quad \textrm{for all} \quad t\ge 0\right)=1.
\]
We claim that the process $\Gamma_t^{(x,y)}:=\Gamma^{(y)}_t-\Gamma^{(x)}_t$, for $ t\ge 0$, satisfies a similar equation. Indeed 
\[
\begin{split}
\Gamma^{(x,y)}_t&:= y-x+b\int_0^t \Gamma^{(x,y)}_s\ud s-\int_0^t G_{\Gamma^{(x)}_t}(\Gamma^{(x,y)}_s) \ud s +\sqrt{2\gamma^2} \int_0^t \int_0^{\Gamma^{(x,y)}_s}W^\prime(\ud s, \ud u)\\
&\hspace{3cm}+\int_0^t\int_{(0,\infty)} \int_0^{\Gamma^{(x,y)}_{s-}}z\widetilde{N}^{(b)\prime}(\ud s, \ud z, \ud u)+\int_0^t \Gamma^{(x,y)}_{s-}\ud  S_s, 
\end{split}
\]
where $G_x(z)=g(x+z)-g(x)$  is a non-decreasing continuous and positive function on $[0,\infty)$,
\[
W^\prime(\ud s, \ud u)=W(\ud s, \ud u +\Gamma^{(x)}_s),
\]
and 
\[
\widetilde{N}^{(b)\prime}_x(\ud s, \ud z, \ud u)=\widetilde{N}^{(b)}_x(\ud s, \ud z, \ud u+\Gamma^{(x)}_s)
\]
which are respectively a Gaussian white noise with intensity $\ud s \ud u$ and a Poisson random measure with intensity $\ud s\mu(\ud z) \ud u$. Using again  the version of Lemma \ref{lemma_comp} for the SDE with Gaussian white noise, we deduce  
\[
\mathbb{P}\left(\Gamma^{(x,y),\sharp}_t\ge \Gamma^{(x,y)}_t, \quad \textrm{for all} \quad t\ge 0\right)=1,
\]
where the process $\Gamma^{(x,y), \sharp}$ is the unique strong solution of 
\[
\begin{split}
\Gamma^{(x,y), \sharp}_t&:= y-x+b\int_0^t \Gamma^{(x,y), \sharp}_s\ud s +\sqrt{2\gamma^2} \int_0^t \int_0^{\Gamma^{(x,y), \sharp}_s}W^\prime(\ud s, \ud u)\\
&\hspace{3cm}+\int_0^t\int_{(0,\infty)} \int_0^{\Gamma^{(x,y), \sharp}}z\widetilde{N}^{(b)\prime}(\ud s, \ud z, \ud u)+\int_0^t \Gamma^{(x,y), \sharp}_{s-}\ud  S_s, 
\end{split}
\]
which is equivalent to $( Z^\sharp, \mathbb{P}^\sharp_{y-x})$. In other words, the process $\Gamma^{(x,y)}$ hits $0$ a.s., and  from its dynamics we observe that  $0$ is an absorbing boundary.

Now, let $f\in \mathcal{B}_b(\mathbb{R})$, the space of bounded measurable functions, and let 
\[
T^{(x,y)}=\inf\{t\ge 0: \Gamma^{(y)}_t=\Gamma^{(x)}_t\}.
\] 
Then $\Gamma^{(y)}_t=\Gamma^{(x)}_t$ for $t\ge T^{(x,y)}$ and observe,
\[
\begin{split}
\Big|{\tt P}_t f(y)-{\tt P}_t f(x)\Big|&\le \mathbb{E}\left[\Big|f(\Gamma^{(y)}_t)-f(\Gamma^{(x)}_t)\Big|\mathbf{1}_{\{t<T^{(x,y)}\}}\right]\\
&\le2 \|f\|_{\infty} \mathbb{P}(t<T^{(x,y)})\\
&\le 2\|f\|_{\infty} \mathbb{P}^\sharp_{y-x}(T_0>t)\\
&\le 2\|f\|_{\infty} \mathbb{E}\left[1-e^{-(y-x)v_t(0,\infty, \overline{K})}\right],
\end{split}
\]
where $\|\cdot\|_{\infty}$ denotes the supremum norm and $v_t(0,\infty,\overline{K})$ is a functional of the environment such that
\[
\mathbb{P}^\sharp_{y}(Z_t=0|\overline{K})=e^{-yv_t(0,\infty,\overline{K})}>0,
\]
see for instance Theorems 4.1 and  4.3 in He et al. \cite{he2016continuous}. In other words, $|{\tt P}_tf(y)-{\tt P}_tf(x)|$ goes to $0$ as $|x-y|$ goes to $0$, implying that ${\tt P}_t f$ is a continuous function on $[0, \infty)$. The proof is now complete. 
\end{proof}

We now prove Theorem \ref{theo_meanfinite}. 

\begin{proof}[Proof of Theorem \ref{theo_meanfinite}]  First of all, from Lemma~\ref{lemma_comp}, it is enough to prove our result for a process with a random environment which has no downward jumps larger than $1-e^{-1}$. Hence, we assume in all this proof that $Z$ is solution to~\eqref{SDE_Levy} with
\begin{equation}\label{SDE_SLevy}
S_t={\tt d} t+ \sigma B^{(e)}_t +\int_0^t \int_{(1,\infty)} (e^z-1)N^{(e)}({\ud s},{\ud z})+\int_0^t \int_{(-1,1)} (e^z-1)\widetilde{N}^{(e)}({\ud s},{\ud z}),
\end{equation}

Our first step is to prove that the expectation of the extinction time of the process is finite.
Recall that  $T_M$  denotes  the first passage time for the process $Z$ below a level $M>0$, i.e.
$T_M:=\inf \{t \geq 0, Z_t\leq M\}.$
 As we will see below, the finiteness of the first moment of such random times will be useful for deducing  our result. Hence,  we first show that there exist $M>0$ such that 
\begin{equation}
 \label{firststep}
  \sup_{x\geq 0} \E_x\big[ T_M\big] = \sup_{x\geq M} \E_x\big[ T_M\big] <\infty.
\end{equation}
In order to deduce \eqref{firststep}, we use similar arguments  as those  used in Le \cite{le2014processus}.
With this goal in mind, we observe from Assumption \eqref{H2} that
\begin{equation}\label{tradH2}
 \lim_{y\to +\infty} \frac{g(y)-\theta y}{y}=\infty,
\end{equation}
for $\theta:=\max\{b+{\tt d},0\}$. In addition from Lemma 2.3 in Le and Pardoux \cite{vi2015height}, we deduce that there exists $a_0 >0$ such that $g(y)-\theta y>0$ for any $y\geq a_0$ and 
\begin{equation}\label{tradH2bis}
  \int_{a_0}^{\infty} \frac{\ud y}{g(y)-\theta y}<\infty.
 \end{equation} 
We then introduce $A>\theta(e-1)$ large enough such that the inequality below holds
\begin{equation}
 \label{ass_A}
 \begin{split}
C(A)&:=1- \left(\frac{\theta(2\gamma^2+\sigma^2)}{2A^2}+\frac{\theta}{A(A-\theta)}\int_{(0,1)}z^2\mu({\ud z})+\frac{1}{A} \left(\int_{(1,\infty)}z\mu({\ud z})+\overline{\pi}(1) \right)\right .\\
&\hspace{6cm}\left . +\left(\frac{\theta}{A^2}+\frac{\theta}{A(A-\theta(e^1-1))}\right)\int_{(-1, 1)} z^2\pi(\ud z)\right)>0,
\end{split}
\end{equation}
where $\overline{\pi}(x)=\pi((x,\infty))$, $x\ge0$. From \eqref{tradH2} and \eqref{tradH2bis}, it is clear that  there exists a constant $M>(a_0+1)e$ such that 
\begin{equation}
\label{ass_M}
\int_{Me^{-1}}^{\infty}\frac{\ud w}{g(w)-\theta w}\leq \frac{1}{A} \qquad \text{and} \qquad g(y)-\theta y\geq Ay \geq A, \qquad \text{ for all } \quad y\geq Me^{-1}.
\end{equation}
 Such constant $M$ will be our threshold. 
 For our purposes, we define the function $G$ in $C^2(\R)$ as follows
 \[
 G(y)=\left\{ \begin{aligned}
 &\int_{a_0}^y\frac{\ud w}{g(w)-\theta w} &&\text{ if } y \geq a_0+1,\\
& 0 && \text{ if } y\leq a_0,
 \end{aligned}
 \right.
 \]
and such that $G$ is non-negative  and non-decreasing. Thus  applying It\^o's formula to $G(Z_{t\land T_M})$, we find
\begin{equation}
\label{itoformula}
\begin{split}
G(Z_{t \wedge T_M})- G(Z_0)&= -t \wedge T_M - \int_0^{t\wedge T_M} \frac{g'(Z_s)-\theta }{(g(Z_s)-\theta Z_s)^2}\left( \gamma^2 Z_s+\frac{\sigma^2}{2}Z_s^2\right){\ud s} \\
& +\int_0^{t\wedge T_M} \frac{\sqrt{2\gamma^2 Z_s}}{g(Z_s)-\theta Z_s}\ud B^{(b)}_s+ \int_0^{t\wedge T_M} \frac{\sigma{Z_s}}{g(Z_s)-\theta Z_s} \ud B^{(e)}_s \\
&+\int_0^{t \wedge T_M}\int_{(0,\infty)} Z_s\left(G(Z_{s}+z)-G(Z_{s})-\frac{z}{g(Z_{s})-\theta Z_s}\right) \mu({\ud z}){\ud s}\\
& +\int_0^{t \wedge T_M}\int_{(0,\infty)}\int_0^{Z_{s-}} [G(Z_{s-}+z)-G(Z_{s-})]
\widetilde{N}^{(b)}({\ud s},{\ud z},{\ud u})\\
& +\int_0^{t \wedge T_M}\int_{(1,\infty)} [G(e^z Z_{s-})-G(Z_{s-})]{N}^{(e)}({\ud s},{\ud z})\\
& +\int_0^{t \wedge T_M}\int_{(-1,1)} [G(e^z Z_{s-})-G(Z_{s-})]\widetilde{N}^{(e)}({\ud s},{\ud z})\\
&+\int_0^{t \wedge T_M}\int_{(-1,1)} \left(G(e^z Z_{s})-G(Z_{s})-\frac{(e^z-1)Z_s}{g(Z_s)-\theta Z_s}\right) \pi(\ud z){\ud s}.\\
\end{split}
\end{equation}
Firstly, note that $Z_{s} \geq Me^{-1} >a_0+1$ for any $s\leq t\land T_M$, in other words, we have an explicit formula for $G(Z_s)$. Next, we take expectations under the assumption that the process $Z$ starts at $x\ge M$, in both sides of the previous identity and we study separately each  term of the right-hand side. Our aim is to show that each expectation can be bounded from above using $\E_x[t\wedge T_M]$. For simplicity, we enumerate the lines in order of appearance.

(1) For the first integral of the right hand side of \eqref{itoformula}, we recall that $Z_s \geq Me^{-1}$  for  $s \leq t \wedge T_M$, that $g$ is non-decreasing and we use the second formula in ~\eqref{ass_M} to  deduce 
\begin{equation}
\label{maxterm2}
\E_x\left[ \int_0^{t\wedge T_M} \frac{\theta-g'(Z_s)}{(g(Z_s)-\theta Z_s)^2}\left( \gamma^2 Z_s+\frac{\sigma^2}{2}Z_s^2\right){\ud s} \right] \leq \frac{\theta(2\gamma^2+\sigma^2)}{2A^2} \E_x\Big[t \wedge T_M\Big]. 
\end{equation}

(2) Studying the quadratic variation of both continuous local martingales of the second line of the right-hand side of \eqref{itoformula} together with the second formula in ~\eqref{ass_M}, we observe that  both processes  are real martingales. Therefore their expectations are equal to $0$.

 (3) We study the integral that appears in the third line  in \eqref{itoformula} by separating $(0,\infty)$  into two parts $(0,1]$ and $(1,\infty)$. We first deal with the integral restricted to $(0,1)$. Since $g$ is non-decreasing, we bound $G(Z_{s}+z)-G(Z_{s})$ from above by $z(g(Z_s)-\theta(Z_s+z))^{-1}$. In addition with the second formula in \eqref{ass_M}, we obtain the following upper bound
\begin{equation}
\label{maxterm3}
\begin{aligned}
\E_x&\left[ \int_0^{t \wedge T_M} \int_{(0,1)}Z_{s} \left[G(Z_{s}+z)-G(Z_{s})-\frac{z}{g(Z_{s})-\theta Z_{s}}\right] \mu({\ud z}){\ud s} \right] \\
&\hspace{2cm} \leq \E_x \left[ \int_0^{t \wedge T_M}\int_{(0,1)}\frac{\theta z^2 Z_{s}}{(g(Z_{s})-\theta (Z_{s}+ z))(g(Z_{s})-\theta Z_{s})}  \mu({\ud z}){\ud s} \right]\\
&\hspace{2cm}\leq \E_x\Big[t \wedge T_M\Big] \frac{\theta}{A(A-\theta)} \int_{(0,1)} z^2\mu({\ud z}) <\infty .
\end{aligned}
\end{equation}
Concerning the integral restricted to $(1,\infty)$, we drop the last term, which is negative, and we use again the second formula in~\eqref{ass_M} to bound $G(Z_s+z)-G(Z_s)$ and find
\begin{equation}
\label{maxterm3bis}
\begin{aligned}
\E_x&\left[ \int_0^{t \wedge T_M} \int_{(1,\infty)}Z_{s} \left[G(Z_{s}+z)-G(Z_{s})-\frac{z}{g(Z_{s})-\theta Z_{s}}\right] \mu({\ud z}){\ud s} \right] \\
&\hspace{1cm} \leq  \E_x  \left[\int_0^{t \wedge T_M}\int_{(1,\infty)} \int_0^z \frac{Z_{s}}{A(Z_{s}+w)} \ud w \mu({\ud z}) {\ud s}\right]
\leq \frac{\int_{(1,\infty)}z \mu({\ud z})}{A}  \E_x\Big[t \wedge T_M\Big] .
\end{aligned}
\end{equation}

(4) Again, we split the interval $(0,\infty)$ into $(0,1]$ and $(1,\infty)$ and use similar computations  as in part (3)  in order to deduce that the integral restricted to $(0,1]$ is a square integrable martingale and the integral restricted to $(1, \infty)$ is a martingale. In other words, we manipulate
\[\E_x\left[ \int_0^{t \wedge T_M}\int_{(0,\infty)}Z_{s} f(G(Z_{s}+z)-G(Z_{s}),z) \mu({\ud z}){\ud s} \right],\] 
with $f(x,z)=x^2\mathbf{1}_{(0,1]}(z)$  and $f(x,z)=|x|\mathbf{1}_{(1,\infty)}(z)$ respectively. Their expectations are thus $0$.

(5) Similarly, using Fubini's Theorem and the first inequality in \eqref{ass_M}, we deduce 
\begin{equation*}
\begin{aligned}
\E_x\Bigg[ \int_0^{t \wedge T_M}\int_{(1,\infty)} |G(e^z Z_{s-})-G(Z_{s-})|\pi(\ud z) \ud s\Bigg] 
& \leq \E_x\left[ \int_0^{t \wedge T_M}  \int_{Z_{s}}^{\infty} \frac{\ud w}{g(w)-\theta w} \left( \int_{1} ^{\infty} \pi(\ud z) \right)\ud s \right]\\
& \leq t  \overline{\pi}(1) \left(\int_{Me^{-1}}^{\infty} \frac{\ud w}{g(w)-\theta w}\right)\leq \frac{t}{A} \overline{\pi}(1).
\end{aligned}
\end{equation*}
In other words, the stochastic integral of the fifth term can be written as the sum of a martingale and a finite variation process. Moreover its expectation is bounded from above by $\E[t\wedge T_M]  {\overline{\pi}(1)}/{A}$.

(6) Since $g(w)-\theta w \geq Aw$, we find that the integral term of the sixth line is a square integrable martingale. Indeed, we observe
\[
\begin{split}
\E\left[ \int_0^{t \wedge T_M}\int_{(-1, 1)} \left(\int_{Z_{s}}^{e^z Z_{s}} \frac{\ud w}{g(w)-\theta w}\right)^2 \pi({\ud z}){\ud s} \right]& \leq  \E\left[ \int_0^{t \wedge T_M}\int_{(-1,1)} \left(\int_{Z_{s}}^{e^z Z_{s}} \frac{\ud w}{Aw} \right)^2 \pi({\ud z}){\ud s} \right]\\
& \leq \frac{t}{A^2} \int_{(-1,1)} z^2 \pi({\ud z}) <\infty.
\end{split}
\]
In other words,  its expectation is equal to $0$.

(7) Finally, we study the last line in \eqref{itoformula} by splitting again the integral into two parts, i.e. we split $(-1,1)$ into $(-1,0]$ and $(0,1)$. Thus, using again the second inequality of \eqref{ass_M} and the fact that $A>\theta(e-1)$, we deduce that for any $w \in [0,y(e^z-1)]$, $y\ge 1$ and $z\in (-1,1)$,
 \[
  g(y+w)-\theta (y+w) \geq g(y)-\theta y e^z \geq Ay-\theta (e-1) y  > 0.
 \]
Hence,
\begin{equation}\label{maxterm5a}
\begin{split}
\E_x&\left[ \int_0^{t \wedge T_M} \int_{(0,1)} \left(G(e^z Z_{s})-G(Z_{s})-\frac{(e^z-1)Z_s}{g(Z_s)-\theta Z_s}\right) \pi(\ud z){\ud s} \right] \\
& \hspace{1cm} \leq \E_x\left[ \int_0^{t \wedge T_M} \int_{(0,1)} \left(\frac{(e^z-1)Z_s}{g(Z_s)-\theta Z_s e^z}-\frac{(e^z-1)Z_s}{g(Z_s)-\theta Z_s}\right) \pi(\ud z){\ud s} \right] \\ 
&\hspace{1cm} \leq \E_x \left[ \int_0^{t \wedge T_M}\int_{(0,1)}\frac{\theta (e^z-1)^2 (Z_{s})^2}{(A Z_{s}-\theta(e^1-1) Z_s)(g(Z_{s})-\theta Z_{s})}  \pi({\ud z}){\ud s} \right]\\
&\hspace{1cm}\leq \E_x\Big[t \wedge T_M\Big] \frac{\theta \int_{(0,1)} z^2\pi({\ud z})}{A(A-\theta(e^1-1))} .
\end{split}
\end{equation}
Similarly,  we deal with the second part of the integral and deduce
\begin{equation}\label{maxterm5b}
\begin{split}
\E_x&\left[ \int_0^{t \wedge T_M} \int_{(-1,0)} \left(G(e^z Z_{s})-G(Z_{s})-\frac{(e^z-1)Z_s}{g(Z_s)-\theta Z_s}\right) \pi(\ud z){\ud s} \right] 
\leq \E_x\Big[t \wedge T_M\Big] \frac{\theta \int_{(-1,0)}z^2\pi({\ud z})}{A^2} .
\end{split}
\end{equation}

Thus putting all pieces together (i.e. inequalities  \eqref{maxterm2}, \eqref{maxterm3}, \eqref{maxterm3bis}, \eqref{maxterm5a}, \eqref{maxterm5b} and the bound found in (5) together with \eqref{itoformula}, \eqref{ass_A} and the three null-expectations), we deduce
\begin{equation*}
\begin{aligned}
\E_x\left[ \int_x^{Z_{t\wedge T_M}} \frac{\ud w}{g(w)-\theta w} \right]
 \leq -C(A)  \E[t\wedge T_M],
 \end{aligned}
\end{equation*}
 with $C(A)>0$. In other words, for any $x,t\geq 0$,
 \begin{equation*}
 \E_x\Big[t\wedge T_M\Big] \leq \frac{1}{C(A)} \E_x\left[ \int^x_{Z_{t\wedge T_M}} \frac{\ud w}{g(w)-\theta w} \right] \leq \frac{1}{C(A)} \int_{Me^{-1}}^{\infty} \frac{\ud w}{g(w)-\theta w}.
 \end{equation*}
 Hence using the Monotone Convergence Theorem,  as $t$ goes to $\infty$, we deduce \eqref{firststep}.

In order to  prove that the process becomes extinct almost surely, we first show that the time to extinction for the process $Z$ starting from $M$ is not almost surely infinite. We recall that we assumed that the environment  has no negative jumps larger than $1-e^{-1}$. Using  Proposition~\ref{comparison} (both processes with the same restriction on the negative jumps of the environment),  we observe that for any $x\leq M$, the process $(Z, \mathbb{P}_x)$ is stochastically dominated by $( Z^\sharp, \mathbb{P}^\sharp_x)$. The process $Z^\sharp$ is a CB-process in a L\'evy random environment (without competition) which is characterized by the branching mechanism  $\psi(\lambda)$. Since $\psi$ satisfies Grey's condition,  Theorem 4.1 of \cite{he2016continuous} ensures that there is $t_0>0$ for which 
 \[
 0<\P^\sharp_M\Big( Z^\sharp_{t_0}=0\Big)\leq \inf_{x \leq M}\P_x\Big(Z_{t_0}=0\Big):=p.
 \]
 Then we  denote by $T^x_{M}$ for the stopping time  $T_M$ under $\mathbb{P}_x$. Reasoning by recurrence and using the Markov property, we prove that  the extinction time of $Z$ is stochastically dominated from above by the random variable 
 $
 \sum_{i=1}^{\xi} (\bar\tau_i+t_0),
 $
 where $\xi$ is a geometric random variable that counts the number of random steps before $Z$ becomes extinct and $\{\bar\tau_i\}_{i\geq 0}$ are i.i.d., independent of $\xi$ and have the same distribution as $\sup_{x\geq 0}T^x_{M}$. To be more precise, the  algorithm is as follows: we start from $x$, we wait a random time $\tau_1\leq \bar\tau_1$ until the process is below the level $M$ and then the process becomes extinct before an time interval of size $t_0$ with probability $p$. If the process is not extinct after the time $\tau_1+t_0$, we start again the procedure thanks to the Markov property.
Hence,
 $$
 \sup_{x \geq 0}\E_x[T_0] \leq \frac{1}{p}\left({t_0}+\E\big[\sup_{x\geq 0} T_M^x\Big]\right)= \frac{1}{p}\left({t_0}+\sup_{x\geq 0} \E_x\big[T_M\Big]\right)<\infty,
 $$
 and in particular the process becomes extinct a.s.

 It remains to prove that the point $\infty$ is a continuous entrance point 
 and that the process can be extended  to a Feller process on $[0,\infty]$. Proposition~\ref{comparison} 
guarantees that the sequence of random variables $(T_m, \P_x)$ is increasing with respect to $x$. Thus,  it converges almost surely to a random variable here denoted by $T_m^{\infty}$. Then, from the first part of this proof, for any $m\geq M$,
\begin{equation}\label{eq_majTM}
\sup_{x\geq m}\E_x[T_m]\leq \int_{me^{-1}}^{\infty}\frac{\ud w}{g(w)-\theta w}\underset{m\to \infty}{\longrightarrow} 0.
\end{equation}
From Chebyshev's inequality and the Monotone Convergence Theorem, we deduce that for any $t>0$
\[
\lim_{m\to \infty}\lim_{x\to \infty}\P_x(T_m>t)\le \lim_{m\to \infty}\lim_{x\to \infty}\frac{E_x[T_m]}{t}=0.
\]
In other words the point $\infty$ is a continuous entrance point.

Next, we prove the extension to a Feller process on $[0,\infty]$. Let $\mathcal{C}_0([0,\infty])$ be the set of continuous functions on $[0,\infty]$ that vanish at $\infty$ and recall that  $(\mathtt{P}_t, t\geq 0)$ denotes the semigroup associated to the process $Z$ which is Feller on $[0,\infty)$. 

Let $t> 0$ be fixed. Recall, from Proposition~\ref{comparison}, that for any non-decreasing sequence $\{x_n\}_{n\ge 1}$ of strictly positive real numbers the sequence of  random variable $\{Z_t,\P_{x_n}\}_{n\ge 1}$ is non-decreasing. Hence it converges a.s. to a limit that we denote by $Z_t^\infty \in [0,\infty]$. Then, for any $f\in \mathcal{C}_0([0,\infty])$, $f$ is bounded and from the Dominated Convergence Theorem,  we deduce
$$
{\tt P}_t f(x)=\E_x[f(Z_t)]\underset{x\to \infty}{\longrightarrow}\E[f(Z_t^\infty)].
$$
We denote this limit by ${\tt P}_tf(\infty)$. Let us prove that the extension, defined as previously on $[0,\infty]$, gives a Feller semigroup. The definition through a limit guarantees that ${\tt P}$ remains a semigroup on $[0,\infty]$. Thus, according to Chapter III of \cite{RY}, it is sufficient to prove that for any $f\in \mathcal{C}_0([0,\infty])$,
\begin{equation*}
\lim_{t\to 0} \|{\tt P}_tf-f\|_{[0,\infty]}=0.
\end{equation*}
Observe that for any $t\geq 0$, $f\in \mathcal{C}_0([0,\infty])$, and $x>0$, we have
\begin{equation*}
\begin{aligned}
\|{\tt P}_tf-f\|_{[0,\infty]} & \leq \|{\tt P}_tf-f\|_{[0,\infty)} +|{\tt P}_tf(\infty)|\\
&\leq 2\|{\tt P}_tf-f\|_{[0,\infty)} +|{\tt P}_tf(\infty)-{\tt P}_tf(x)|+|f(x)|.
\end{aligned}
\end{equation*}
Since ${\tt P}$ is a Feller semigroup on $[0,\infty)$, we conclude that the  term of the right hand side of the previous inequality is small as soon as $t$ is sufficiently small and $x$ is chosen sufficiently large. 

 This ends the proof of Theorem~\ref{theo_meanfinite}.
 \end{proof}

\noindent \textbf{Acknowledgements.} Both authors  acknowledge support from  the Royal Society and CONACyT-MEXICO. This work was concluded whilst JCP was on sabbatical leave holding a David Parkin Visiting Professorship  at the University of Bath, he gratefully acknowledges the kind hospitality of the Department and University


\begin{thebibliography}{99}




\bibitem{afa12}{\sc V. I. Afanasyev, C. B\"oinghoff, G. Kersting, and V. A. Vatutin}: Limit theorems for weakly subcritical branching processes in random environment. {\it J. Theor. Probab.}, \textbf{25}, 703--732, (2012).


\bibitem{BPa} {\sc Ba, M. and Pardoux, E. } {Branching processes with interaction and a generalized Ray-Knight theorem.}  {\it  Ann. Inst. Henri Poincar\'e Probab. Stat.},  51, no. 4, 1290--1313, (2015).

\bibitem{basim} {\sc V. Bansaye and F. Simatos: }{On the scaling limits of Galton Watson processes in varying environment.} {\it  Electron. J. Probab.}, {\bf 20}, no. 75, 1--36, (2015). 

\bibitem{Ban} {\sc V. Bansaye :} { Approximation of stochastic processes by non-expansive flows and coming down from infinity.} To appear in {\it Annals in Applied Probability,} (2019).

\bibitem{BFF} {\sc Berestycki, J., Fittipaldi, M.C. and Fontbona, J.}  {Ray-Knight representation of flows of branching processes with competition by pruning of L\'evy trees}, {\it  Probab. Theory  Related Fields}, \textbf{172}, 725--788, (2018).

\bibitem{bo10}{\sc C. B\"oinghoff, E. E. Dyakonova, G. Kersting, and V. A. Vatutin}: Branching processes in random environment which extinct at a given moment, {\it Markov Process. Related Fields }\textbf{16}, 329--350, (2010).

\bibitem{CLU} {\sc M.E. Caballero, A. Lambert and G. Uribe Bravo:}
		{Proof(s) of the {L}amperti representation of continuous-state
			branching processes. }{\it Probab. Surv.}, {\bf 6}, 62--89, (2009).


\bibitem{DL} {\sc Dawson, D.A. and  Li, Z.} { Stochastic equations, flows and measure-valued processes.} {\it Ann. Probab.} 40, 813--857, (2012).

\bibitem{DK} {\sc D\"oring, L. and Kyrpianou, A. }  {\rm  Entrance and exit at infinity for stable jump diffusions,} {\it Preprint}, (2018). Available at {\tt arXiv:1802.01672.}

\bibitem{EKM} {\sc El Karoui, N. and M\'el\'eard, S.} {Martingale measures and stochastic calculus.} {\it Probab. Theory Relat. Fields},  84, 83--101, (1990).


\bibitem{Fou}{\sc  Foucart, C.} { Continuous state branching processes with competition, duality and reflection at infinity} {\it Preprint} (2017) {\tt arXiv:1711.06827.}

\bibitem{GPP} {\sc Gonz\'alez-Casanova, A., Pardo, J.C. and Perez., J.L.} {Branching processes with interactions: the subcritical cooperative regime.} {\it Preprint}, (2017). {\tt arXiv:1704.04203.}

\bibitem{gri} \sc  A. Grimvall: \rm
		On the convergence of sequences of branching processes. {\it Ann.
			Probab.}, {\bf 2}, 1027--1045 (1974).

 \bibitem{he2016continuous} {\sc He, H. and Li, Z. and Xu, W.} {\rm Continuous-state branching processes in L{\'e}vy random environments} {\it J. Theor. Probab.}, \textbf{31}, 1952--1974, (2018).

\bibitem{Kallenberg} {\sc Kallenberg, O.} {\it Foundations of Modern Probability}.  {\rm Probability and Its Applications,  Springer-Verlag, New York,} (1997).

\bibitem{kei}{\sc N. Keiding:} Extinction and exponential growth in random environments. {\it Theor. Population Biology.} \textbf{8}, 49--63, (1975).
		
		
		
		\bibitem{Kurtz} {\sc T. Kurtz:} {Diffusion approximations for branching processes.} {\it Branching processes. Adv. Probab. Related Topics}, 5, Dekker, New York, 269--292, (1976).
		


\bibitem{Lambert2005}{\sc Lambert, A.}  {\rm The branching process with logistic growth}, {\it Ann. Appl. Probab.}, 15, no. 2,  1506--1535,  (2005).

\bibitem{le2014processus} {\sc Le, V. } {\it Processus de branchement avec interaction.} {PhD Thesis, Univ. Aix-Marseille} (2014).
 
 \bibitem{vi2015height} {\sc Le, V. and Pardoux, E.} {\rm Height and the total mass of the forest of genealogical trees of a large population with general competition.} {\it ESAIM: Probability and Statistics}, 19, 172--193 (2015).
 
 \bibitem{leman2018extinction} {\sc Leman, H. and Pardo, J.C. } {\it Extinction time of a CB-processes with competition in a L\'evy random environment.} {\tt arXiv:1801.04501} (2018).
 
  \bibitem{leman2018extinction1} {\sc Leman, H. and Pardo, J.C. } {\it Extinction time of the logistic branching processes  in a Brownian environment.} {\it Work in progress} (2018).
  
 \bibitem{Li2016} {\sc Li, P.S.} {\rm  A continuous-state polynomial branching process, } {\it To appear in Stochastic Processes and their Applications}, (2019). Available at {\tt arXiv:1609.09593.}
 
 \bibitem{LiYhangZhou} {\sc Li, P.S., Yang, Z. and Zhou, X.} {\rm  A general continuous-state nonlinear branching process, } {\it To appear in Annals of Applied Probability}, (2019). Available at {\tt arXiv:1708.01560.}

\bibitem{Ma2015} {\sc Ma, R.} {\rm Lamperti transformation for continuous-state branching processes with competition and applications},  {\it Statist. Probab. Lett.} 107, 11--17, (2015).

\bibitem{PP1} {\sc Palau, S. and Pardo, J.C.} {\rm Continuous state branching processes in random environment: the Brownian case.} {\it  Stochastic Process. Appl. } 127, no.3, 957--994,  (2017). 

\bibitem{PP} {\sc Palau, S. and Pardo, J.C.} {\rm Branching processes in a L\'evy random environment.} {\it  Acta Appl. Math., } 153, no.1, 55--79,  (2018). 

\bibitem{Pardoux} {\sc Pardoux, E. } {\it Probabilistic models of population evolution. Scaling limits, genealogies and interactions.} Mathematical Biosciences Institute Lecture Series. Stochastics in Biological Systems, 1.6. Springer ; MBI Mathematical Biosciences Institute, Ohio State University, Columbus, OH, (2016).

\bibitem{RY} {\sc Revuz, D. and Yor, M.} {\it Continuous martingales and Brownian motion.} Third edition, Springer Science \& Business Media, (2005).

\bibitem{smith} {\sc W.L. Smith and W.E. Wilkinson:} On branching processes in random environments. {\it Ann. Math. Statist.}, {\bf 40}, 814--827, (1969).
\end{thebibliography}
\end{document}